\newcommand{\circledOne}{\text{\ding{172}}}
\newcommand{\circledTwo}{\text{\ding{173}}}
\newcommand{\circledThree}{\text{\ding{174}}}
\newcommand{\circledFour}{\text{\ding{175}}}
\newcommand{\circledFive}{\text{\ding{176}}}
\newcommand{\circledSix}{\text{\ding{177}}}
\newcommand{\circledSeven}{\text{\ding{178}}}
\newcommand{\circledEight}{\text{\ding{179}}}
\newcommand{\circledNine}{\text{\ding{180}}}
\DeclareMathOperator*{\argmin}{arg\,min}
\newcommand{\dotprod}[2]{\left\langle #1,#2 \right\rangle}
\newcommand{\norms}[1]{\left\| #1 \right\|}
\newcommand{\expect}[1]{\mathbb{E}\left[ #1 \right]}
\renewcommand{\gg}{\mathbf{g}}
\newcommand{\bb}{\mathbf{b}}
\newcommand{\al}[1]{{\color{black}#1}} 
\begin{document}
\title{Accelerated Zero-Order SGD Method for Solving the Black Box Optimization Problem under ``Overparametrization'' Condition\thanks{The research was supported by Russian Science Foundation (project No. 21-71- 30005), \url{https://rscf.ru/en/project/21-71-30005/}.}}
\titlerunning{AZO-SGD Method for Overparametrization Problems}
%

\author{Aleksandr Lobanov\inst{1,2}\orcidID{0000-0003-1620-9581}  \and
Alexander Gasnikov\inst{1,2,3}\orcidID{0000-0002-7386-039X}}

\authorrunning{A. Lobanov et al.}
%
\institute{Moscow Institute of Physics and Technology, Dolgoprudny, Russia
 \and 
 ISP RAS Research Center for Trusted Artificial Intelligence, Moscow, Russia 
\and 
Institute for Information Transmission Problems RAS, Moscow, Russia \\
\email{\{lobanov.av,gasnikov.av\}@mipt.ru}}
\maketitle              
\begin{abstract}
This paper is devoted to solving a convex stochastic optimization problem in a overparameterization setup for the case where the original gradient computation is not available, but an objective function value can be computed. For this class of problems we provide a novel gradient-free algorithm, whose creation approach is based on applying a gradient approximation with $l_2$ randomization instead of a gradient oracle in the biased Accelerated SGD algorithm, which generalizes the convergence results of the AC-SA algorithm to the case where the gradient oracle returns a noisy (inexact) objective function value. We also perform a detailed analysis to find the maximum admissible level of adversarial noise at which we can guarantee to achieve the desired accuracy. We verify the theoretical results of convergence using a model example.

\keywords{Black-Box Optimization  \and Overparametrization \and Accelerated Zero-Order SGD Method \and Biased Gradient Oracle.}
\end{abstract}

\section{Introduction} \label{sec:Introduction}

The black-box optimization problem \cite{Audet_2017,Conn_2009, Pardalos_2021} usually arises when we have few information about the objective function. Such problems can appear when the objective function is non-smooth \cite{Gasnikov_2022_ICML} (i.e. no gradient computation is available) or, for instance, when the function is smooth \cite{Akhavan_2022} (may even have a higher order of smoothness \cite{Bach_2016}), but the process of computing the derivatives is too expensive in contrast to computing the value of the objective function. Moreover, this problem is often solved under conditions of privacy, when some data cannot be disseminated due to confidentiality. Then a gradient-free oracle \cite{Rosenbrock_1960} (or in other words zero-order oracle) acts as some ``black box", which returns only objective function value $f(x)$ at requested point $x$ with some bounded adversarial noise $\delta(x) \leq \Delta$ (where $\Delta$ is level noise). Where the latter means that the gradient-free oracle can give an inexact value of the objective function. As practice shows~\cite{Bogolubsky_2016}, the lower the level of adversarial noise, the more expensive it is to call a gradient-free oracle, so it is important to understand how large the level of adversarial noise can be, at which a ``good" convergence rate is still guaranteed (by ``good" convergence rate is meant the convergence of the algorithm when $\Delta = 0$). This concept of a gradient-free oracle is common in the literature and has the interpretation of a adversarial attack on the black-box model \cite{Chen_2017}. The black box problem is currently actively researched in the optimization \cite{Duchi_2015, Shibaev_2022} and machine learning \cite{Papernot_2016, Papernot_2017} community, since this problem has applications in the following areas: federated learning \cite{Lobanov_2022, Patel_2022}, distributed learning \cite{Scaman_2019, Lobanov_2023_SADOM} and deep learning~\cite{Gao_2018}. A particular need for solving such problems arises in the following applications: model hyperparameters tuning \cite{Hazan_2017,Elsken_2019}, reinforcement learning \cite{Choromanski_2018, Mania_2018}, multi-armed bandits \cite{Bartlett_2008, Bubeck_2012, Shamir_2017}, and many others. 

There are various approaches to solving the black-box problem, but the most common in a theoretically proof-of-concept sense is to create gradient-free optimization algorithms based on the state of the art first-order methods and using various randomized gradient approximations \cite{Gasnikov_2022}. The most common first-order optimization methods in machine learning models are momentum SGD, Adam, and others. But note that these algorithms are variants of Stochastic Gradient Descent (SGD) \cite{Robbins_1951, Bottou_2018}, which use stochastic gradient estimates. By adding a procedure for batching stochastic gradient estimates, it is possible to obtain algorithms that are easily parallelized on several computers. It is data distribution (parallelization) that significantly reduces the computational costs that certainly arise in a huge number of modern machine learning models. Also, with the addition of acceleration one can still achieve improvement in terms of estimates on the number of successive iterations. Thus, for many optimization problems, the state of the art first-order algorithms are accelerated batched variants of SGD. 

In this paper, we focus on solving a stochastic convex black-box optimization problem in a smooth setting with an overparameterization condition. Where the latter means that the model has many more parameters than the data available. We can summarize our contributions as follows:
\begin{itemize}
    \item We derive the convergence rate for a biased Accelerated Stochastic Gradient Descent, which covers smooth convex stochastic optimization problems under the overparameterization setup.

    \item We provide a novel Accelerated Zero-Order Stochastic Gradient Descent Method (AZO-SGD) for solving the black-box problem in a smooth setting under the overparameterization condition. We analyze the robustness of AZO-SGD algorithm to adversarial noise, providing an estimate for the maximum admissible level of adversarial noise at which the desired accuracy can still be achieved. We show that our algorithm is optimal on oracle calls in the class of gradient-free algorithms.

    \item We show the convergence of the Accelerated Zero-Order Stochastic Gradient Descent Method proposed in this paper using a model example of finite sums in which the number of summands is less than the number of variables (the overparameterization condition). 
\end{itemize}

\subsection{Related works}

\paragraph{Adversarial noise.}

Finding the maximum admissible noise level at which one can still guarantee convergence to the desired accuracy $\varepsilon$ is an important issue for the black-box optimization problem. Special attention to this question was allocated by the works \cite{Lobanov_2022, Dvinskikh_2022, Kornilov_2023, Lobanov_2023_WAIT}. For example, in \cite{Dvinskikh_2022} the authors found the maximum admissible level of \textit{adversarial deterministic noise} for a non-smooth convex black-box optimization problem $ \sim \mathcal{O} \left(\varepsilon^2 d^{-1/2} \right)$, moreover in \cite{Lobanov_2022} it is shown that this estimate will be the same for $l_1$ and $l_2$ randomization. In \cite{Kornilov_2023}, the authors showed that by assuming a strong convexity, the estimate maximum level of \textit{adversarial deterministic noise} can be improved to ~$\sim~\mathcal{O} \left(\mu^{1/2} \varepsilon^{3/2} d^{-1/2} \right)$ in non-smooth setting. And in \cite{Lobanov_2023_WAIT} the authors were able to show that this estimation in a non-smooth one can be also improved to~$\sim~\mathcal{O} \left(\varepsilon d^{-1/2} \right)$ by using \textit{adversarial stochastic noise}, since this concept of \textit{adversarial stochastic noise} does not accumulate in the bias, but accumulates only in the variance. In addition, this paper shows that if the function is smooth, the estimate of the maximum level of adversarial stochastic noise can be improved to~$\sim~\mathcal{O} \left(\varepsilon^{1/2} d^{-1/2} \right)$.  In this paper, we will use a gradient approximation via $l_2$ randomization to create a novel gradient-free algorithm, and we will find the maximum admissible level of deterministic noise in a smooth setting under overparameterization condition.

\paragraph{SGD type algorithms.}

Many works \cite{Lan_2012, Woodworth_2021, Gorbunov_2020, Gasnikov_2016, Ajalloeian_2020, Polyak_1963, Lojasiewicz_1963, Yue_2022, Woodworth_2021_over} study the Stochastic Gradient Descent and its variant in different setups. For example, in \cite{Lan_2012} the authors proposed an accelerated method of stochastic gradient descent, AC-SA. Later in \cite{Woodworth_2021} authors proposed optimal algorithms for federated learning architecture, which is based on AC-SA (Single-Machine Accelerated SGD and Mini-Batch Accelerated SGD) method. In \cite{Gorbunov_2020} proposed an clipped accelerated SGD method for heavy-tailed optimization problems based on the accelerated SGD method: Stochastic Similar Triangle Method (SSTM) \cite{Gasnikov_2016}. In \cite{Ajalloeian_2020}, the authors studied the biased SGD method in the Polak-Lojasiewicz \cite{Polyak_1963, Lojasiewicz_1963} setup. It is worth noting that in \cite{Yue_2022} it was shown that for problems satisfying the Polak-Lojasiewicz condition the non-accelerated SGD algorithm will be optimal. In \cite{Woodworth_2021_over}, the study of the AC-SA (accelerated SGD) algorithm was continued already in the overparameterization setup. We in this paper generalize the analysis of the AC-SA algorithm from \cite{Woodworth_2021_over}, to create a biased accelerated SGD algorithm in the overparameterization setting. A biased first-order algorithm is necessary because using $l_2$ randomization produces a bias in the case when $\delta(x) > 0$. Therefore, based on the biased batched accelerated stochastic gradient descent and using $l_2$ randomization, we create a new gradient-free optimization algorithm to solve a convex stochastic black-box optimization problem under overparameterization setup.

\paragraph{Gradient noise assumptions.}

Recently there is a trend in works \cite{Woodworth_2021_over, Rakhlin_2012, Hazan_2014, Bertsekas_1996, Stich_2019, Lobanov_2023, Schmidt_2013, Srebro_2010} of relaxed stochastic gradient variance restriction condition. Very many works (e.g. see \cite{Rakhlin_2012, Hazan_2014}) use the standard assumption: $\expect{\norms{\nabla f(x,\xi)}^2} \leq \sigma^2$. However, already in the works \cite{Bertsekas_1996, Stich_2019, Lobanov_2023, Zhang_2022} used in the analysis of the algorithm a more relaxed assumption of weak growth: $\expect{\norms{\nabla f(x, \xi)}^2} \leq M\norms{\nabla f(x)}^2 + \sigma^2$. The following work \cite{Schmidt_2013} have set the constants so that the strong growth condition assumption is satisfied: $\expect{\norms{\nabla f(x, \xi)}^2} \leq M\norms{\nabla f(x)}^2$. In \cite{Woodworth_2021_over, Srebro_2010} the condition satisfying the overparameterized set: $\expect{\norms{\nabla f(x^*,\xi)}^2} \leq \sigma_*^2$. In this paper, we will also assume uniform smoothness of the function over $\xi$ as well as the overparameterized condition, since our approach to creating gradient-free algorithms is based on the Accelerated Batched Stochastic Gradient Descent (AC-SA) proposed in~\cite{Woodworth_2021_over}.

\subsection{Notations}
We use $\dotprod{x}{y}:= \sum_{i=1}^{d} x_i y_i$ to denote standard inner product of $x,y \in \mathbb{R}^d$, where $x_i$ and $y_i$ are the $i$-th component of $x$ and $y$ respectively. We denote Euclidean norm ($l_2$-norm) in~$\mathbb{R}^d$ as $\norms{x} = \| x\|_2 := \sqrt{\dotprod{x}{x}}$. We use the following notation $B_2^d(r):=\left\{ x \in \mathbb{R}^d : \| x \| \leq r \right\}$ to denote Euclidean ball ($l_2$-ball) and  $S_2^d(r):=\left\{ x \in \mathbb{R}^d : \| x \| = r \right\}$ to denote Euclidean sphere. Operator $\mathbb{E}[\cdot]$ denotes full mathematical expectation. We notation $\tilde{O} (\cdot)$ to hide logarithmic factors. 

\setcounter{footnote}{3}
\subsection{Paper Ogranization}
This paper has the following structure. In Section \ref{sec:Introduction} we introduce this paper and also provide related works. In Section \ref{sec:Technical_Preliminaries} we formulate the problem statement. While in Section \ref{sec:biased_gradient} we provide an accelerated SGD algorithm with a biased gradient oracle in the reparameterization setup. Section \ref{sec:Main_Result} presents the main result of this paper. We confirm the theoretical results via a model example in Section \ref{sec:Experiments}. While Section \ref{sec:Conclusion} concludes this paper. Detailed proofs are presented in the supplementary materials (Appendix)\footnote{The full version of this article, which includes the Appendix can be found by the article title in the arXiv at the following link: \url{https://arxiv.org/abs/2307.12725}.}.


\section{Technical Preliminaries} \label{sec:Technical_Preliminaries}
We study a standard stochastic convex optimization problem:
\begin{equation}\label{eq:init_problem}
    f^* = \min_{x \in \mathbb{R}^d}\left\{ f(x) := \expect{f(x, \xi)} \right\},
\end{equation}
where $f: \mathbb{R}^d \rightarrow \mathbb{R}$ is smooth convex function that we want to minimize over $\mathbb{R}^d$. This problem statement is a general smooth convex stochastic optimization problem, so to define the class of the problem considered in this paper we will introduce some assumptions on the objective function and on the gradient oracle. 

\subsection{Assumptions on the Objective Function}
In all proofs we assume convexity and smoothness of the function $f(x,\xi)$.
\begin{assumption} \label{ass:convex}
    For almost every $\xi$, $f(x,\xi)$ is non-negative, convex w.r.t. $x$,~i.e.
    \begin{equation*}
        \forall x, y, \xi \quad f(y, \xi) \geq f(x, \xi) + \dotprod{\nabla f(x, \xi)}{y - x}.
    \end{equation*}
\end{assumption}

\begin{assumption} \label{ass:smooth}
    For almost every $\xi$, $f(x,\xi)$ is non-negative, $L$-smooth w.r.t.~$x$,~i.e.
    \begin{equation*}
        \forall x, y, \xi \quad f(y, \xi) \leq f(x, \xi) + \dotprod{\nabla f(x, \xi)}{y - x} + \frac{L}{2} \norms{y - x}^2.
    \end{equation*}
\end{assumption}
Assumptions \ref{ass:convex} and \ref{ass:smooth} are common in the literature (see, e.g., \cite{Vaswani_2019, Fatkhullin_2022}). However, it is worth noting that these assumptions require uniform convexity and smoothness over $\xi \sim \mathcal{D}$ \cite{Tran_2022}. This is an essential difference from the standard assumptions, which define a narrower class of the objective function.

\begin{assumption} \label{ass:f_star}
    The function $f(x)$ is a convex and has the minimum value $f^* = \min_{x} f(x)$, which is attained at a point $x^*$ with $\norms{x^*} \leq R$.
\end{assumption}
We explicitly introduce the problem solution $f^*$ in Assumption \ref{ass:f_star}, since our approach implies that the convergence rate depends on the solution $f^*$ (see, e.g., \cite{Cotter_2011}), i.e., our analysis will show an improvement in convergence at $f^* \rightarrow 0$.

\subsection{Assumptions on the Gradient Oracle}
In our analysis we consider the case when we obtain an inexact gradient value when calling the oracle. Therefore we first define a biased gradient oracle.
\begin{definition}[Gradient Oracle] \label{def:biased_oracle}
A map $\gg: \mathbb{R}^d \times \mathcal{D} \rightarrow \mathbb{R}^d$ s.t.
\begin{equation*}
    \gg(x,\xi) = \nabla f(x, \xi) + \bb(x),
\end{equation*}
where $\bb: \mathbb{R}^d \rightarrow \mathbb{R}^d$ such that $\forall x \in \mathbb{R}^d$: $\norms{\bb(x)}^2 \leq \zeta^2$. 
\end{definition} 
Next, we assume that gradient noise is bounded as follows.
\begin{assumption} \label{ass:stoch_noise}
    There exists $\sigma^2_* \geq 0$ such that $\forall x \in \mathbb{R}^d$
    \begin{equation*}
        \expect{\norms{\nabla f(x^*, \xi) - \nabla f(x^*)}^2} \leq \sigma^2_*.
    \end{equation*}
\end{assumption}
Assumption \ref{ass:stoch_noise} is a common assumption for overparameterized optimization problems, since in this setup in many problems \cite{Jacot_2018, Allen_2019, Belkin_2019} $f^*$ can be expected to be small. We introduced Assumptions \ref{ass:convex}--\ref{ass:stoch_noise} because in this paper we based on the results of~\cite{Woodworth_2021_over}, in which it was proved that the convergence rate of the AC-SA method (from \cite{Lan_2012}) has the following form in overparameterization setup~of~problem~\eqref{eq:init_problem}:
\begin{equation} \label{eq:woodworth_ACSA}
    \expect{f(x_{N}^{ag}) - f^*} \leq c \cdot \left( \frac{L R^2}{N^2} + \frac{L R^2}{BN} + \sqrt{\frac{L R^2 f^*}{BN}} \right),
\end{equation}
where $N$ is a iteration number, $B$ is a batch size and $\sigma^2_* \leq 2 Lf^*$ (it's proven~\cite{Woodworth_2021_over}).

\section{Accelerated SGD with Biased Gradient} \label{sec:biased_gradient}
Our approach to create a gradient-free algorithm (for a overparameterized setup) implies the use of $l_2$ randomization (see Section \ref{sec:Main_Result}), based on the AC-SA algorithm from \cite{Woodworth_2021_over}. However, the standard concept of a gradient-free oracle implies the presence of adversarial noise, which can accumulate in both variance and bias. Therefore, it is important to investigate the adversarial noise for the question of maximum admissible noise level, when the desired accuracy can be guaranteed. As can be seen, the result \eqref{eq:woodworth_ACSA} obtained in \cite{Woodworth_2021_over} does not account for the bias in the gradient oracle (see Definition \ref{def:biased_oracle} in the case when $\norms{\bb(x)} = 0$). Consequently, in Theorem \ref{th:biased_AC_SA} we provide a novel biased AC-SA algorithm that is robust to the overparameterized setup and accounts for bias in~gradient~oracle.

\begin{theorem}[Convergence of Biased AC-SA] \label{th:biased_AC_SA}
    Let $f$ satisfy Assumptions \ref{ass:convex}--\ref{ass:f_star} and gradient oracle from Definition \ref{def:biased_oracle} satisfy Assumption \ref{ass:stoch_noise}, then Biased AC-SA algorithm guarantees the convergence with a universal constant $c$
    \begin{equation*} 
        \expect{f(x_{N}^{ag}) - f^*} \leq c \cdot \left( \frac{L R^2}{N^2} + \frac{L R^2}{BN} + \frac{\sigma_* R}{\sqrt{BN}} + \zeta R + \frac{\zeta^2}{2 L}N \right).
    \end{equation*}
\end{theorem}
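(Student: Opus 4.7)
The plan is to adapt the accelerated SGD analysis of Woodworth et al.\ \cite{Woodworth_2021_over} to the biased oracle $\gg(x,\xi)=\nabla f(x,\xi)+\bb(x)$ of Definition \ref{def:biased_oracle}, while carefully isolating the two error contributions that $\bb$ introduces: one linear in $\bb$ coming from the proximal inner product, and one quadratic in $\bb$ coming from the squared-norm term of the $L$-smoothness descent step.

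First I would set up the standard three-sequence AC-SA iteration with the biased mini-batched oracle $\bar{\gg}_k := \tfrac1B\sum_{i=1}^B \gg(x_k^{md},\xi_k^{(i)})$, the coupling $x_k^{md}=(1-\tau_k)x_k^{ag}+\tau_k x_k$, the aggregation $x_{k+1}^{ag}=x_k^{md}-\tfrac{1}{L}\bar{\gg}_k$, and the extrapolation $x_{k+1}=\argmin_x\{\alpha_k\dotprod{\bar{\gg}_k}{x}+\tfrac12\norms{x-x_k}^2\}$, with the usual schedule $\alpha_k\sim k$ and $\tau_k$ chosen as in \cite{Woodworth_2021_over}. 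The key manipulation is the split
\[
\bar{\gg}_k \;=\; \nabla f(x_k^{md}) \;+\; \underbrace{\Bigl(\tfrac1B\textstyle\sum_{i}\nabla f(x_k^{md},\xi_k^{(i)})-\nabla f(x_k^{md})\Bigr)}_{\text{zero-mean noise}} \;+\; \underbrace{\bb(x_k^{md})}_{\norms{\cdot}\le\zeta},
\]
so that every standard AC-SA estimate (convexity, $L$-smoothness descent, three-point lemma for the proximal step) applies verbatim after identifying the extra $\bb$-contributions.

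Next I would write the one-step potential inequality in Lyapunov form $\Phi_{k+1}\le\Phi_k + E_k$, where $\Phi_k$ combines $A_k(f(x_k^{ag})-f^*)$ with $\tfrac12\norms{x_k-x^*}^2$ and $A_k:=\sum_{j<k}\alpha_j\sim k^2$. Tracing through the derivation of \cite{Woodworth_2021_over}, the only new pieces of $E_k$ relative to the unbiased case are $\alpha_k\dotprod{\bb(x_k^{md})}{x_k-x^*}$, produced by the proximal optimality step, and $\tfrac{\alpha_k^2}{2L}\norms{\bb(x_k^{md})}^2$, produced by bounding $\norms{\bar{\gg}_k}^2$ via $\norms{a+b}^2\le 2\norms{a}^2+2\norms{b}^2$ and isolating the $\bb$-piece. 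Telescoping $\Phi_k$ and dividing by $A_N\sim N^2$ recovers the first three terms $LR^2/N^2,\ LR^2/(BN),\ \sigma_*R/\sqrt{BN}$ exactly as in \eqref{eq:woodworth_ACSA} (with the variance term now expressed directly through Assumption \ref{ass:stoch_noise} rather than through the detour $\sigma_*^2\le 2Lf^*$). The two bias contributions then add, after normalization, $\zeta R$ (apply Cauchy--Schwarz and use $\norms{x_k-x^*}\le R$, together with $\sum_k\alpha_k/A_N\sim 1$) and $\tfrac{\zeta^2}{2L}N$ (since $\sum_k\alpha_k^2/A_N\sim N$ when $\alpha_k\sim k$).

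The main obstacle I anticipate is justifying the uniform bound $\norms{x_k-x^*}\le R$ (in expectation) that underlies the Cauchy--Schwarz step producing $\zeta R$: under a biased oracle the proximal iterates can drift outward, and accelerated schemes amplify such drift more aggressively than plain SGD. The standard remedy is to run a second induction on the same one-step inequality to show that $\expect{\norms{x_k-x^*}^2}\le R^2$ persists along the trajectory (absorbing constants into the universal $c$); equivalently, one argues that $\Phi_k$ itself remains bounded so that the distance component it contains cannot explode. Once this is secured, combining the telescoped inequality with the parameter choices $\alpha_k=k$, $\gamma_k=1/L$, and batch size $B$ yields the five terms of the claim.
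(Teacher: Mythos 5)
Your overall route is the same as the paper's: mini-batch the biased oracle, split it as $\nabla f(x_k^{md})$ plus zero-mean noise plus $\bb(x_k^{md})$, push these through the AC-SA one-step potential inequality, and observe that the bias contributes one term linear in $\zeta$ (via the inner product with $x^*-x_k$) and one quadratic in $\zeta$ (via the squared deviation of the oracle), which after telescoping and normalizing by $\beta_{N-1}\gamma_{N-1}\simeq\gamma N^2$ yield $\zeta R$ and $\zeta^2 N/(2L)$. The paper also needs (and proves, as its Lemma 2) the variance bound at $x_k^{md}$ obtained by relating it through $L$-smoothness to $f(x_k^{ag})-f^*$ and to the variance at $x^*$; this is not a detail you can bypass, since Assumption \ref{ass:stoch_noise} only controls the noise at $x^*$, but it is inherited from the unbiased analysis you cite, so I count it as covered.

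The genuine gap is exactly at the point you flag: the bound $\norms{x_k-x^*}\le R$ (up to a constant) needed for the Cauchy--Schwarz step $\gamma_k\dotprod{\bb(x_k^{md})}{x^*-x_k}\le\gamma_k\zeta\norms{x^*-x_k}$. Your proposed remedy --- an induction showing $\expect{\norms{x_k-x^*}^2}\le R^2$ persists, or that ``$\Phi_k$ remains bounded so the distance cannot explode'' --- is circular as stated: the bound on $\Phi_k$ you would induct on is obtained precisely by controlling the accumulated terms $\sum_k\gamma_k\dotprod{\bb(x_k^{md})}{x^*-x_k}$, which is what requires the distance bound in the first place. Moreover the claim is false in general for an unconstrained prox step with a persistent bias: the drift can push $\expect{\norms{x_k-x^*}^2}$ well above $R^2$ (one can at best close a recursion of the form $\Phi_{k+1}\le\Phi_k+\gamma_k\zeta\sqrt{2\Phi_k}+\dots$, which costs extra factors). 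The paper removes the issue by construction: its Biased AC-SA projects every iterate onto the ball $B_2^d(R)$ (the step $x_{k+1}=\min\{1,R/\norms{\tilde{x}_{k+1}}\}\tilde{x}_{k+1}$), and its three-point lemma (Lemma \ref{lemma:lem1}) is stated for this constrained update; together with Assumption \ref{ass:f_star} ($\norms{x^*}\le R$) this gives $\norms{x_k-x^*}\le 2R$ deterministically, and also the bound $\norms{x_k^{md}-x_k^{ag}}\le 2R\beta_k^{-1}$ used inside the variance lemma. You should either add this projection to your algorithm or replace your induction with a non-circular drift argument; as written, the step producing the $\zeta R$ term is not justified.
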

The results of Theorem \ref{th:biased_AC_SA} show the convergence of the AC-SA algorithm, considering the bias $\bb(x)$ in the gradient oracle.  It is not difficult to see that if we consider the case without bias ($\zeta = 0$), the convergence result of Theorem \ref{th:biased_AC_SA} will fully correspond to the result \eqref{eq:woodworth_ACSA}. The last two terms in Theorem \ref{th:biased_AC_SA} are standard for the accelerated algorithm (see, for example, \cite{Dvinskikh_2021, Vasin_2023}). There are several ways to obtain these results: using the ($\delta,L$)-oracle technique \cite{Devolder_2013}, modifying Assumptions \ref{ass:convex}, \ref{ass:smooth}, or performing sequential reasoning with current assumptions. The proof of Theorem \ref{th:biased_AC_SA} can be found in supplementary materials (Appendix~\ref{Appendix:proof_th1}).


\section{Main Result} \label{sec:Main_Result}
In this section, we present the main result of this work, namely a gradient-free algorithm for solving a convex smooth stochastic black-box optimization problem in an overparameterized setup. We further narrow the problem class \eqref{eq:init_problem} considered in this section to the black box problem, that is, when the calculation of the gradient oracle is not available for some reason. Unfortunately, we cannot apply the AC-SA algorithm or even the biased AC-SA algorithm to solving this problem class. Therefore, there is a need to create an algorithm that only requires calculations of function values. Such algorithms are usually called \textit{gradient-free}, since the efficiency of this class of algorithms is determined by three criteria: the maximum admissible level of adversarial noise $\Delta$, iterative complexity $N$, and in particular the total number of calls to the \textit{gradient-free oracle} $T$. Our approach in creating gradient-free algorithm based on the biased AC-SA algorithm. Instead of the gradient oracle (see Definition \ref{def:biased_oracle}) we use the gradient~approximation:
\begin{equation} \label{eq:gradient_approximation}
    \gg(x,\xi,e) = \frac{d}{2 \tau}\left( f_\delta(x + \tau e, \xi) - f_\delta(x - \tau e, \al{\xi}) \right) e,
\end{equation}
where $e$ is a vector uniformly distributed on unit sphere $S_2^d(1)$, $\tau$ is a smoothing parameter and $f_\delta(x, \xi) = f(x, \xi) + \delta(x)$ ($|\delta(x)| \leq \Delta$) is a \textit{gradient-free} oracle. Thus Algorithm \ref{alg:AZO_SGD} presents a novel gradient-free method, namely Accelerated Zero-Order Stochastic Gradient Descent (AZO-SGD) Method for solving the black-box optimization problem \eqref{eq:init_problem} under the overparameterization condition.

\begin{algorithm}
        \caption{Accelareted Zero-Order Stochastic Gradient Descent (AZO-SGD)}\label{alg:AZO_SGD}
        \textbf{Input}: Start point $x^{ag}_0 = x_0 \in \mathbb{R}^d$, maximum number of iterations $N \in \mathbb{Z}_+$.\\
        \hspace*{\algorithmicindent} Let stepsize $ \al{\gamma}_k > 0$, parameters $\beta_k, \tau > 0$, batch size $B \in \mathbb{Z}_+$. 
        \begin{algorithmic}[1]
        \For{$k=0,...,N-1$}
        \State $\beta_k = 1 + \frac{k}{6}$ and $\al{\gamma}_k = \al{\gamma} (k+1)$ for $\gamma = \min \left\{ \frac{1}{12 L}, \frac{B}{24 L(N+1)}, \sqrt{\frac{B R^2}{Lf^* N^3}} \right\}$  
        \State $x^{md}_k = \beta^{-1}_k x_k + (1 - \beta^{-1}_k) x_k^{ag}$
        \State Sample $\{ e_1,...,e_{B} \}$ and $ \{ \xi_1,...,\xi_{B} \}$ independently
        \State Define $\gg_k = \frac{1}{B} \sum_{i=1}^{B} \gg(x^{md}_k, \xi_i, e_i)$ using \eqref{eq:gradient_approximation}
        \State $\tilde{x}_{k+1} = x_k - \al{\gamma}_k \gg_k$
        \State $x_{k+1} = \min \left\{ 1, \frac{R}{\norms{\tilde{x}_{k+1}}} \right\} \tilde{x}_{k+1}$
        \State $x_{k+1}^{ag} = \beta^{-1}_k x_{k+1} + (1 - \beta^{-1}_k) x_{k}^{ag}$
        \EndFor
        \end{algorithmic}
        \textbf{Output}: $x_N^{ag}$.
    \end{algorithm}
    Next, we provide Theorem \ref{th:AZO_SGD}, in which we show the convergence results for Accelareted Zero-Order Stochastic Gradient Descent (AZO-SGD) Method.
    \begin{theorem} \label{th:AZO_SGD}
        Let $f$ satisfy Assumptions \ref{ass:convex}--\ref{ass:f_star} and gradient approximation \eqref{eq:gradient_approximation} with parameter $\tau \leq \frac{\varepsilon}{LR}$ satisfy Assumption \ref{ass:stoch_noise}, then Accelareted Zero-Order Stochastic Gradient Descent (AZO-SGD) Method (see Algorithm \ref{alg:AZO_SGD}) achieves $\varepsilon$-accuracy: $\expect{f(x_{N}^{ag}) - f^*} \leq \varepsilon$ after 
        \begin{equation*}
            N = \mathcal{O}\left( \sqrt{\frac{L R^2}{\varepsilon}} \right), \quad T = \max \left\{ \mathcal{O}\left( \frac{LR^2}{\varepsilon} \right), \mathcal{O}\left( \frac{d \sigma_*^2 R^2}{\varepsilon^{2}} \right) \right\}
        \end{equation*}
        number of iterations, total number of gradient-free oracle calls and at
        \begin{equation*}
            \Delta \leq \frac{\varepsilon^2}{d L R^2}
        \end{equation*}
        the maximum admissible level of adversarial noise.
    \end{theorem}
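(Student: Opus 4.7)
The strategy is to invoke Theorem~\ref{th:biased_AC_SA} after identifying the bias $\zeta$ and the effective stochastic variance of the $l_2$-randomized estimator \eqref{eq:gradient_approximation}, and then tuning the smoothing parameter $\tau$ and the batch size $B$ against the target accuracy $\varepsilon$.

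First I would compute the bias. Averaging $\gg(x,\xi,e)$ over $e\sim\mathrm{Unif}(S_2^d(1))$ and $\xi$ yields $\expect{\gg(x,\xi,e)} = \nabla \hat f_\tau(x) + r(x)$, where $\hat f_\tau(x):=\mathbb{E}_{\tilde e\in B_2^d(1)}[f(x+\tau\tilde e)]$ is the spherical smoothing of $f$ and $r(x)=\mathbb{E}_e[(d/2\tau)(\delta(x+\tau e)-\delta(x-\tau e))e]$ absorbs the adversarial perturbation. Under Assumption~\ref{ass:smooth}, $\norms{\nabla\hat f_\tau(x)-\nabla f(x)}\leq L\tau$; a direct norm bound gives $\norms{r(x)}\leq d\Delta/\tau$. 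Hence, in the language of Definition~\ref{def:biased_oracle},
\begin{equation*}
    \zeta \;\lesssim\; L\tau + \frac{d\Delta}{\tau}.
\end{equation*}

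Next I would bound the second moment about the mean. A routine computation, combining the identity for spherical estimators with Assumption~\ref{ass:smooth} applied to each $f(\cdot,\xi)$, gives
\begin{equation*}
    \expect{\norms{\gg(x,\xi,e)-\expect{\gg(x,\xi,e)}}^2}
    \;\lesssim\; d\,\expect{\norms{\nabla f(x,\xi)}^2} + dL^2\tau^2 + \frac{d^2\Delta^2}{\tau^2}.
\end{equation*}
Evaluated at $x=x^*$ and combined with the overparameterization bound $\sigma_*^2\leq 2Lf^*$ recalled after~\eqref{eq:woodworth_ACSA}, this exhibits the batched estimator as a biased oracle of the type demanded by Theorem~\ref{th:biased_AC_SA}, with an effective stochastic variance $\sigma_{*,\mathrm{eff}}^2 = \mathcal{O}\bigl(d\sigma_*^2 + dLf^* + dL^2\tau^2 + d^2\Delta^2/\tau^2\bigr)$, divided by $B$ after mini-batching.

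Substituting $\zeta$ and $\sigma_{*,\mathrm{eff}}^2$ into the bound of Theorem~\ref{th:biased_AC_SA} and tuning parameters yields the stated rates. The deterministic term $LR^2/N^2$ dictates $N=\mathcal{O}(\sqrt{LR^2/\varepsilon})$. Balancing $\sigma_{*,\mathrm{eff}}R/\sqrt{BN}\leq\varepsilon$ fixes the batch size and produces the oracle budget $T=BN=\max\{\mathcal{O}(LR^2/\varepsilon),\mathcal{O}(d\sigma_*^2 R^2/\varepsilon^2)\}$. Choosing $\tau=\varepsilon/(LR)$ makes $L\tau R=\varepsilon$, and imposing $\Delta\leq\varepsilon^2/(dLR^2)$ forces $(d\Delta/\tau)R=\varepsilon$; a short substitution then verifies that both $\zeta R$ and the acceleration-amplified term $\zeta^2 N/(2L)$ are $\mathcal{O}(\varepsilon)$ in the natural regime $\varepsilon\leq LR^2$.

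The main obstacle is the variance analysis in the second step: one must push the $l_2$-randomization through without destroying the overparameterization structure, so that the leading stochastic contribution at $x^*$ still scales with $\sigma_*^2$ (equivalently, with $Lf^*$) rather than with a uniform gradient bound, while simultaneously tracking the $d^2\Delta^2/\tau^2$ noise contribution. It is the accumulation of the latter through the accelerated term $\zeta^2 N/(2L)$ that ultimately pins down the admissible noise level to $\Delta\lesssim\varepsilon^2/(dLR^2)$ and therefore dictates the constraint on $\Delta$ in the statement.
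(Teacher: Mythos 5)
Your proposal follows essentially the same route as the paper: bound the bias of the $l_2$-randomized estimator by $L\tau + d\Delta/\tau$, bound its second moment at $x^*$ by $\mathcal{O}\left(d\sigma_*^2 + dL^2\tau^2 + d^2\Delta^2/\tau^2\right)$ (the paper uses the Wirtinger--Poincar\'e inequality for this), substitute both into Theorem~\ref{th:biased_AC_SA}, and tune $N$, $B$, $\tau$, $\Delta$ term by term, arriving at the same parameter choices. One small slip in your closing remark: in the regime $\varepsilon \leq LR^2$ the binding constraint on $\Delta$ comes from the bias term $\zeta R \leq \varepsilon$ (i.e.\ $d\Delta R/\tau \leq \varepsilon$), not from the accumulated term $\zeta^2 N/(2L)$, which only requires $\Delta \lesssim \varepsilon^{7/4}/(d L^{3/4} R^{3/2})$; your earlier substitution already used the correct term, so the stated bound is unaffected.
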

    The result of Theorem \ref{th:AZO_SGD} shows the effective iterative complexity $N$, since an accelerated method was taken as the base (biased AC-SA, see Theorem \ref{th:biased_AC_SA}). It is also worth noting that the batch size $B = \max \left\{ \mathcal{O}\left( \sqrt{\frac{L R^2}{\varepsilon}} \right), \mathcal{O}\left( \frac{d \sigma_*^2 R}{L^{1/2} \varepsilon^{3/2}} \right) \right\}$ can change with time, i.e., it directly depends on $\sigma^2_*$, which leads to an optimal estimate of the number of gradient-free oracle calls $T$. Finally, one of the main results of this paper is the estimation for the maximum admissible noise level $\Delta$. This estimation is inferior to the other estimations $ \sim \mathcal{O} \left(\varepsilon^2 d^{-1/2} \right)$ in the non-smooth setting and $ \sim \mathcal{O} \left(\varepsilon^{3/2} d^{-1/2} \right)$ in smooth setting, but this is expected, since Algorithm \ref{alg:AZO_SGD} is working in a different setup, namely in overparameterization condition. It's also noted that we don't guarantee that this evaluation is unimproved, but only states that at $ \Delta \leq \mathcal{O} \left(\varepsilon^2 d^{-1} \right)$ there will be convergence to $\varepsilon$-accuracy. We present an open question for future research: improving the estimate to the maximum admissible noise level, as well as finding upper bounds on noise level beyond which convergence in the overparameterized setup cannot~be~guaranteed. The proof of Theorem \ref{th:biased_AC_SA} can be found in supplementary~materials~(Appendix~\ref{Appendix:proof_th2}).

    \begin{remark}[General case]
        It is worth noting that this paper, and in particular Theorem \ref{th:biased_AC_SA} and Theorem \ref{th:AZO_SGD}, focus on the Euclidean case. However, using the work of \cite{Ilandarideva_2023} (namely, Algorithm 2) as a basis, and similarly generalizing the convergence results (Corollary 1, \cite{Ilandarideva_2023}) to the case with a biased gradient oracle (see Definition \ref{def:biased_oracle}), we can obtain a gradient-free algorithm for a more general class of problems ($L_p$-norm and presence of constraints). In this case, the parameters (number of iterations $N$, the maximum admissible level of adversarial noise $\Delta$, smoothing parameter $\tau$) of the gradient-free algorithm will be the same as in Theorem \ref{th:AZO_SGD}, except for the total number of oracle calls: let given $1/p + 1/q = 1$
        \begin{equation*}
            T = N \cdot B = \max \left\{ \mathcal{O}\left( \frac{LR^2}{\varepsilon} \right), \mathcal{O}\left( \frac{\min\{ q, \ln d \} d^{2 - \frac{2}{p}} \sigma_*^2 R^2}{\varepsilon^{2}} \right) \right\}.
        \end{equation*}
        This generalization allows one to solve problem \eqref{eq:init_problem} in a broader setting, for instance, by imposing a constraint. In particular, by solving the problem on a simplex ($p=1$, $q=\infty$) we can achieve a reduction of the total number of calls to the zero-order oracle $T$ by $\ln(d)$ compared to the Euclidean case.
    \end{remark}


\section{Experiments} \label{sec:Experiments}
In this section, we will use a simple example to verify the theoretical results, namely to show the convergence of the proposed algorithm Accelerated Zero-Order Stochastic Gradient Descent Method (AZO-SGD, see Algorithm \ref{alg:AZO_SGD}). The optimization problem \eqref{eq:init_problem} is as follows:
\begin{equation} \label{eq:experiments_problems}
    \min_{x \in \mathbb{R}^d} f(x) := \frac{1}{m} \sum_{i = 1}^{m} \left( l_{i}(x) \right)^2,
\end{equation}
where $l(x) = Ax - b$ is system of $m$ linear equations under overparameterization ($d>m$), $A \in \mathbb{R}^{m \times d}$ , $x, b \in \mathbb{R}^d$. Problem \eqref{eq:experiments_problems} is a convex stochastic optimization problem \eqref{eq:init_problem}, also known as the Empirical Risk Minimization problem, where $\xi=i$ is one of $m$ linear equations.
\vspace{-0.8cm}
\begin{figure}[H]
    \centering
    \includegraphics[width=0.5\textwidth]{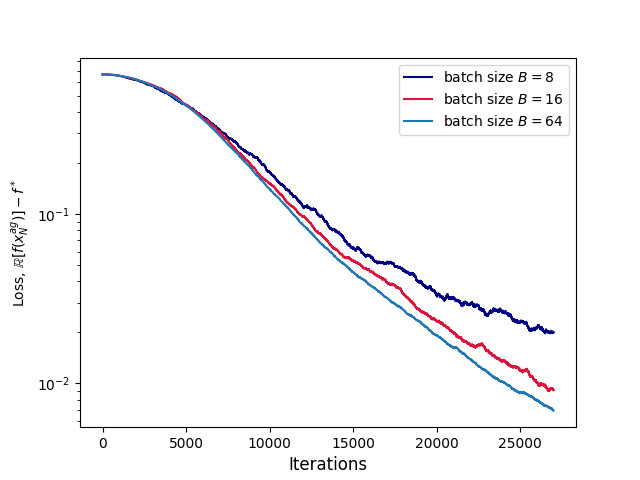}
    \caption{Convergence of the Accelerated Zero-Order Stochastic Gradient Descent Method and the effect of parameter $B$ (batch size) on the iteration complexity.}\label{Fig1}   
\end{figure}
\vspace{-0.8cm}
In Figure \ref{Fig1} we see the convergence of the proposed gradient-free algorithm. We can also conclude that as the batch size increases, the number of iterations required to achieve the desired accuracy decreases. This effect occurs especially in applications with huge-scale machine learning models. We optimize $f(x)$ \eqref{eq:experiments_problems} with parameters: $d = 256$ (dimensional of problem), $m = 128$
(number of linear equations), $\tau = 0.001$
(smoothing parameter), $\al{\gamma} = 0.0001$ (step size),
$B = \left\{ 8, 16, 64 \right\}$ (batch size). We also understand machine inaccuracy by noise level.


\section{Conclusion} \label{sec:Conclusion}
The optimization problem in the overparameterization setup has not yet been sufficiently studied. In this paper, we proposed a novel gradient-free algorithm: Accelerated Zero-Order Stochastic Gradient Descent Method for solving the smooth convex stochastic black-box optimization problem in the overparameterization setup. Our approach in creating the gradient-free Algorithm \ref{alg:AZO_SGD} was based on accelerated stochastic gradient descent. However, since there is an accumulation of adversarial noise in $l_2$ randomization, the result of \cite{Woodworth_2021_over} was generalized to the case of a biased gradient oracle. We also showed that the proposed gradient-free algorithm (AZO-SGD) is optimal in terms of iteration and oracle complexities. In addition, we obtained the first estimate, as far as we know, of the level of adversarial noise in the overparameterization setup, thereby opening up many potentially interesting future research questions in this setup.

The authors are grateful to Daniil Vostrikov.


\printbibliography

\newpage

\appendix
\begin{center}
        \LARGE \bf APPENDIX
    \end{center}

\section{Auxiliary Facts and Results}

    In this section we list auxiliary facts and results that we use several times in our~proofs.
    
    \subsection{Squared norm of the sum} For all $a_1,...,a_n \in \mathbb{R}^d$, where $n=\{2,3\}$
    \begin{equation}
        \label{eq:squared_norm_sum}
        \norms{a_1 + ... + a_n }^2 \leq n \norms{ a_1 }^2 + ... + n \norms{a_n}^2.
    \end{equation}

    \subsection{$L$ smoothness function}
        Function $f$ is called $L$-smooth on $\mathbb{R}^d$ with $L~>~0$ when it is differentiable and its gradient is $L$-Lipschitz continuous on $\mathbb{R}^d$, i.e.\ 
        \begin{equation}
            \norms{\nabla f(x) - \nabla f(y)} \leq L \norms{x - y},\quad \forall x,y\in \mathbb{R}^d. \label{eq:L_smoothness}
        \end{equation}
         It is well-known that $L$-smoothness implies (see e.g., \cite{Nesterov_2003})
        \begin{eqnarray*}
            f(y) \leq f(x) + \dotprod{\nabla f(x)}{y-x} + \frac{L}{2}\norms{y-x}^2\quad \forall x,y\in \mathbb{R}^d,
        \end{eqnarray*} 
        and if $f$ is additionally convex, then
        \begin{eqnarray*}
            \norms{ \nabla f(x) - \nabla f(y) }^2 \leq 2L \left( f(x) - f(y) - \dotprod{ \nabla f(y)}{x-y} \right) \quad \forall x,y \in \mathbb{R}^d. 
        \end{eqnarray*}

    \subsection{Wirtinger-Poincare inequality}
        Let $f$ is differentiable, then for all $x \in \mathbb{R}^d$, $\tau e \in S^d_2(\tau)$:
        \begin{equation}\label{eq:Wirtinger_Poincare}
            \expect{f(x+ \tau e)^2} \leq \frac{\tau^2}{d} \expect{\norms{\nabla f(x + \tau e)}^2}.
        \end{equation}

    \newpage
\section{Proof Theorem \ref{th:biased_AC_SA}} \label{Appendix:proof_th1}
    In this section, our reasoning will be based on the proof from \cite{Woodworth_2021_over}. Initially, let us formally define a batched biased gradient oracle (see Definition \ref{def:biased_oracle}):
    \begin{equation}
        \label{eq:batched_gradient_oracle}
        \gg^B(x) := \frac{1}{B} \sum_{i=1}^{B} \gg(x, \xi_i), \quad \text{for i.i.d. } \xi_1, \xi_2, ..., \xi_B \sim \mathcal{D}. 
    \end{equation}
    Then Algorithm \ref{alg:AC_SA} presents a Biased Accelerated Mini-batch Stochastic Gradient Descent (Biased AC-SA method) under the overparameterization condition. 
    \begin{algorithm}
        \caption{Biased AC-SA}\label{alg:AC_SA}
        \textbf{Input}: Start point $x^{ag}_0 = x_0 \in \mathbb{R}^d$, maximum number of iterations $N \in \mathbb{Z}_+$.\\
        \hspace*{\algorithmicindent} Let stepsize $ \al{\gamma}_k > 0$, parameters $\beta_k, \gamma > 0$, batch size $B \in \mathbb{Z}_+$. 
        \begin{algorithmic}[1]
        \For{$k=0,...,N-1$}
        \State $\beta_k = 1 + \frac{k}{6}$ and $\al{\gamma}_k = \al{\gamma} (k+1)$ for $\gamma = \min \left\{ \frac{1}{12 L}, \frac{B}{24 L(N+1)}, \sqrt{\frac{B R^2}{Lf^* N^3}} \right\}$  
        \State $x^{md}_k = \beta^{-1}_k x_k + (1 - \beta^{-1}_k) x_k^{ag}$
        \State $\tilde{x}_{k+1} = x_k - \al{\gamma}_k \gg_k^B(x_k^{md})$, where $\gg_k^B(x_k^{md})$ is defined from \eqref{eq:batched_gradient_oracle}
        \State $x_{k+1} = \min \left\{ 1, \frac{R}{\norms{\tilde{x}_{k+1}}} \right\} \tilde{x}_{k+1}$
        \State $x_{k+1}^{ag} = \beta^{-1}_k x_{k+1} + (1 - \beta^{-1}_k) x_{k}^{ag}$
        \EndFor
        \end{algorithmic}
        \textbf{Output}: $x_N^{ag}$.
    \end{algorithm}
    
    Then it is not hard to show that the following Lemma is also correct for the batched biased gradient oracle \eqref{eq:batched_gradient_oracle}. Therefore, to avoid repetition, we formulate this lemma without proof, by referring to the original proof. 
    \begin{lemma}[see Lemma 1, \cite{Woodworth_2021_over}]\label{lemma:lem1}
    Let $x_{k+1}, x_k$ and $x_k^{md}$ be updated as in Algorithm \ref{alg:AC_SA}. Then for any $x \in \left\{ x: \norms{x}\leq R \right\}$
        \begin{eqnarray*}
            \gamma_k \dotprod{\gg^B(x^{md}_k)}{x_{k+1} - x_{k}^{md}} &\leq& \gamma_k \dotprod{\gg^B(x^{md}_k)}{x - x_{k}^{md}} + \frac{1}{2} \norms{x - x_k}^2 \\
            && \quad - \frac{1}{2} \norms{x - x_{k+1}}^2 - \frac{1}{2} \norms{x_{k+1} - x_k}^2.
        \end{eqnarray*} 
    \end{lemma}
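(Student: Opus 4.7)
The plan is to recognize the two-step update (gradient step followed by Euclidean projection onto the ball of radius $R$) as a single proximal subproblem, and then extract the three-point inequality via the standard first-order optimality condition.

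First, I would show that the iterate $x_{k+1}$ in Algorithm \ref{alg:AC_SA} can be characterized variationally as
\begin{equation*}
x_{k+1} = \argmin_{\|x\| \leq R} \left\{ \gamma_k \dotprod{\gg^B(x_k^{md})}{x} + \tfrac{1}{2} \norms{x - x_k}^2 \right\}.
\end{equation*}
This is a routine check: completing the square rewrites the objective as $\tfrac{1}{2}\|x - (x_k - \gamma_k \gg^B(x_k^{md}))\|^2$ up to a constant, so the minimizer over the ball is exactly the Euclidean projection of $\tilde{x}_{k+1} = x_k - \gamma_k \gg^B(x_k^{md})$ onto $\{x : \|x\| \leq R\}$, which matches the projection formula in line 5 of the algorithm.

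Next, I would invoke the first-order optimality condition for a constrained strongly convex minimization: for every feasible $x$ with $\|x\| \leq R$,
\begin{equation*}
\dotprod{\gamma_k \gg^B(x_k^{md}) + (x_{k+1} - x_k)}{x - x_{k+1}} \geq 0,
\end{equation*}
which rearranges to $\gamma_k \dotprod{\gg^B(x_k^{md})}{x_{k+1} - x} \leq \dotprod{x_{k+1} - x_k}{x_{k+1} - x}$. Then I would apply the three-point identity $2\dotprod{u}{v} = \|u\|^2 + \|v\|^2 - \|u - v\|^2$ with $u = x_{k+1} - x_k$ and $v = x_{k+1} - x$ to convert the right-hand side into squared norms, yielding
\begin{equation*}
\gamma_k \dotprod{\gg^B(x_k^{md})}{x_{k+1} - x} \leq \tfrac{1}{2}\norms{x - x_k}^2 - \tfrac{1}{2}\norms{x - x_{k+1}}^2 - \tfrac{1}{2}\norms{x_{k+1} - x_k}^2.
\end{equation*}

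Finally, I would split the inner product on the left via $x_{k+1} - x = (x_{k+1} - x_k^{md}) - (x - x_k^{md})$ and move the $\gamma_k \dotprod{\gg^B(x_k^{md})}{x - x_k^{md}}$ term to the right, which produces exactly the claimed inequality. The only step with any real content is the optimality argument; the rest is algebraic manipulation. The main subtle point will be checking that the projection-followed-by-gradient-step formulation in the algorithm is genuinely the proximal minimizer over the ball, since this is what unlocks the optimality condition cleanly; verifying this equivalence for the specific projection rule $x_{k+1} = \min\{1, R/\|\tilde{x}_{k+1}\|\} \tilde{x}_{k+1}$ is where I would be most careful, though it follows directly from the closed-form projection onto a Euclidean ball.
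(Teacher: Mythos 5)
Your proposal is correct and is essentially the intended argument: the paper itself states Lemma~\ref{lemma:lem1} without proof and defers to Lemma~1 of \cite{Woodworth_2021_over}, whose proof is exactly this identification of the gradient-step-plus-ball-projection with the prox subproblem $\argmin_{\norms{x}\leq R}\{\gamma_k\dotprod{\gg^B(x_k^{md})}{x}+\tfrac12\norms{x-x_k}^2\}$ followed by the first-order optimality condition and the three-point identity. One small slip: your intermediate rearrangement should read $\gamma_k\dotprod{\gg^B(x_k^{md})}{x_{k+1}-x}\leq\dotprod{x_{k+1}-x_k}{x-x_{k+1}}$ (not $\dotprod{x_{k+1}-x_k}{x_{k+1}-x}$); with that sign corrected, the three-point identity yields precisely the displayed bound you state next, and the rest goes through.
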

    Next, we provide some auxiliary lemma before presenting proof of Theorem~\ref{th:biased_AC_SA}.
    \begin{lemma}\label{lemma:lem2}
        Let function $f(x,\xi)$ satisfy Assumptions \ref{ass:convex}-\ref{ass:smooth} and the gradient oracle (see Definition \ref{def:biased_oracle}) satisfy Assumption \ref{ass:stoch_noise}, and let $\gg^B(x^{md}_k)$ be defined in \eqref{eq:batched_gradient_oracle}. Then
        \begin{eqnarray*}
            \expect{\norms{\gg^B(x_k^{md}) - \nabla f(x_k^{md})}^2} \leq \frac{8 L^2 R^2}{B \beta_k^2} + \frac{8 L}{B} \expect{f(x_k^{ag}) - f^*} + \frac{4 \sigma^2_{*}}{B} + \norms{\bb(x_k^{md})}^2
        \end{eqnarray*}
    \end{lemma}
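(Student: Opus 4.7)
\textbf{Proof plan for Lemma \ref{lemma:lem2}.} The plan is to split the batched error into a zero-mean stochastic part and the bias $\bb(x_k^{md})$, then control the stochastic variance by successively inserting $\nabla f(x_k^{ag},\xi)$ and $\nabla f(x^*,\xi)$ and using smoothness together with Assumption \ref{ass:stoch_noise}.

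First I would write
\begin{equation*}
    \gg^B(x_k^{md}) - \nabla f(x_k^{md}) \;=\; \frac{1}{B}\sum_{i=1}^B \bigl(\nabla f(x_k^{md},\xi_i) - \nabla f(x_k^{md})\bigr) \;+\; \bb(x_k^{md}),
\end{equation*}
and condition on everything drawn before $\xi_1,\dots,\xi_B$. Conditionally the first sum has zero mean and is independent of the deterministic vector $\bb(x_k^{md})$, so the cross term vanishes and, by independence of the $\xi_i$'s,
\begin{equation*}
    \expect{\norms{\gg^B(x_k^{md}) - \nabla f(x_k^{md})}^2} \;=\; \frac{1}{B}\,\expect{\norms{\nabla f(x_k^{md},\xi) - \nabla f(x_k^{md})}^2} + \norms{\bb(x_k^{md})}^2.
\end{equation*}
I then upper bound the variance by the second moment $\expect{\norms{\nabla f(x_k^{md},\xi)}^2}$.

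Next, I apply \eqref{eq:squared_norm_sum} with $n=2$ twice in a nested way: once to split $\nabla f(x_k^{md},\xi)$ around $\nabla f(x_k^{ag},\xi)$, then again to split $\nabla f(x_k^{ag},\xi)$ around $\nabla f(x^*,\xi)$. For the first piece, Lipschitz continuity of $\nabla f(\cdot,\xi)$ (Assumption \ref{ass:smooth}) combined with the identity $x_k^{md} - x_k^{ag} = \beta_k^{-1}(x_k - x_k^{ag})$ and the bound $\norms{x_k - x_k^{ag}}\leq 2R$ (which follows by induction from the projection step in Algorithm \ref{alg:AC_SA}, since $x_k^{ag}$ is a convex combination of iterates already lying in $B_2^d(R)$) gives $\leq L^2\cdot 4R^2/\beta_k^2$. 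For the $\|\nabla f(x_k^{ag},\xi)-\nabla f(x^*,\xi)\|^2$ piece I use the convex-smooth inequality recalled in Appendix A.2, $\|\nabla f(x,\xi)-\nabla f(y,\xi)\|^2 \leq 2L(f(x,\xi)-f(y,\xi)-\dotprod{\nabla f(y,\xi)}{x-y})$ with $y=x^*$; taking expectation in $\xi$ and using $\nabla f(x^*)=0$ (since $x^*$ is an unconstrained minimizer by Assumption \ref{ass:f_star}) collapses the cross term and leaves $2L\,\expect{f(x_k^{ag})-f^*}$. Finally, $\expect{\norms{\nabla f(x^*,\xi)}^2}=\expect{\norms{\nabla f(x^*,\xi)-\nabla f(x^*)}^2}\leq \sigma_*^2$ by Assumption \ref{ass:stoch_noise}.

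Combining the three bounds with the accumulated constants $2,2\cdot 2,2\cdot 2$ from the two splits yields
\begin{equation*}
    \expect{\norms{\nabla f(x_k^{md},\xi)}^2} \;\leq\; \frac{8L^2R^2}{\beta_k^2} + 8L\,\expect{f(x_k^{ag})-f^*} + 4\sigma_*^2,
\end{equation*}
which, after dividing by $B$ and adding back $\norms{\bb(x_k^{md})}^2$, is exactly the claim. The main obstacle I anticipate is bookkeeping the constants: one must choose the two successive 2-way splits (rather than a single 3-way split) in precisely the grouping above so that the coefficients land on $8,8,4,1$, and one must justify the ball bound $\norms{x_k - x_k^{ag}}\leq 2R$ from the algorithmic projection step; everything else is a clean application of smoothness, convexity, and Assumption \ref{ass:stoch_noise}.
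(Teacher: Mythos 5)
Your proposal is correct and follows essentially the same route as the paper's proof: the same bias--variance decomposition of the batched oracle error, the same reduction of the per-sample variance to the second moment, the same two nested applications of \eqref{eq:squared_norm_sum} around $\nabla f(x_k^{ag},\xi)$ and $\nabla f(x^*,\xi)$, and the same use of gradient Lipschitzness, the cocoercivity inequality with $\nabla f(x^*)=0$, and Assumption \ref{ass:stoch_noise}, yielding the constants $8,8,4,1$. Your remark that $\norms{x_k-x_k^{ag}}\leq 2R$ needs the projection step of the algorithm is a point the paper asserts without elaboration, so no gap.
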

    \begin{proof}
        \begin{align}
            \mathbb{E}\|&\gg^B(x_k^{md}) - \nabla f(x_k^{md})\|^2 \nonumber \\
            &= \expect{\norms{\frac{1}{B} \sum_{i=1}^{B} \nabla f(x^{md}_k, \xi_i) - \nabla f(x_k^{md})}^2} 
            \nonumber \\ 
            & \quad \quad + \expect{\norms{\gg^B(x_k^{md}) - \frac{1}{B} \sum_{i=1}^{B} \nabla f(x^{md}_k, \xi_i)}^2}
            \nonumber \\ 
            &\overset{\eqref{eq:batched_gradient_oracle}}{=} \expect{\norms{\frac{1}{B} \sum_{i=1}^{B} \nabla f(x^{md}_k, \xi_i) - \nabla f(x_k^{md})}^2}  + \expect{\norms{\frac{1}{B} \sum_{i=1}^{B} \bb(x_k^{md})}^2}
            \nonumber \\ 
            &= \frac{1}{B^2} \sum_{i=1}^{B} \expect{\norms{ \nabla f(x^{md}_k, \xi_i) - \nabla f(x_k^{md})}^2} + \norms{\bb(x_k^{md})}^2
            \nonumber \\ 
            &\leq \frac{1}{B} \expect{\norms{\nabla f(x_k^{md}, \xi_1)}^2} + \norms{\bb(x_k^{md})}^2
            \nonumber \\ 
            &\overset{\eqref{eq:squared_norm_sum}}{\leq} \frac{2}{B} \expect{\norms{\nabla f(x_k^{md}, \xi_1) - \nabla f(x_k^{ag}, \xi_1)}^2}  + \frac{2}{B} \expect{\norms{\nabla f(x_k^{ag}, \xi_1)}^2} + \norms{\bb(x_k^{md})}^2
            \nonumber \\ 
            & \overset{\eqref{eq:L_smoothness}}{\leq} \frac{2L^2}{B} \expect{\norms{x_k^{md} - x_k^{ag}}^2} + \frac{4}{B} \expect{\norms{\nabla f(x_k^{ag}, \xi_1) - \nabla f(x^*, \xi_1)}^2}
            \nonumber \\ 
            & \quad \quad + \frac{4}{B} \expect{\norms{\nabla f(x^*, \xi_1)}^2} + \norms{\bb(x_k^{md})}^2.
            \label{eq:lemma2}
        \end{align}
        For the first term on the right hand side:
        \begin{equation}
            \label{eq:lemma2_1}
            x^{md}_k = \beta^{-1}_k x_k + (1 - \beta^{-1}_k) x^{ag}_k  \Rightarrow  \norms{x_k^{md} - x^{ag}_k} = \beta_k^{-1} \norms{x_k - x_k^{ag}} \leq 2 R \beta^{-1}_k.
        \end{equation}
        For second term, we apply Theorem 2.1.5 from \cite{Nesterov_2003}:
        \begin{align}
            \mathbb{E}\|\nabla f(x_k^{ag}, \xi_1) &- \nabla f(x^*, \xi_1) \|^2
            \nonumber \\
            & \leq 2L \expect{f(x_k^{ag}, \xi_1) - f(x^*, \xi_1) - \dotprod{\nabla f(x^*, \xi_1)}{x_k^{ag} - x^*}}
            \nonumber \\
            & = 2 L \expect{f(x^{ag}_k) - f^*}. \label{eq:lemma2_2}
        \end{align}
        For third term, we apply Assumption \ref{ass:stoch_noise}:
        \begin{equation}\label{eq:lemma2_3}
            \expect{\norms{\nabla f(x^*, \xi_1)}^2} = \expect{\norms{\nabla f(x^*, \xi_1) - \nabla f(x^*)}^2} \leq \sigma_*^2.
        \end{equation}
        Substituting \eqref{eq:lemma2_1}-\eqref{eq:lemma2_3} into \eqref{eq:lemma2} we complete the proof of the Lemma. 
        \\ \qed
    \end{proof}
    We can now proceed to prove the main theorem of Section \ref{sec:biased_gradient}.\\
    \begin{flushleft}
        \textit{ Proof of the Theorem \ref{th:biased_AC_SA}}. 
    \end{flushleft}
    Using the convexity and $L$-smoothness of the function $f$ we can obtain the following upper bound:
    \begin{align}
        \beta_k \gamma_k f(x_{k+1}^{ag}) &\leq \beta_{k} \gamma_{k} \left[ f(x_k^{md}) + \dotprod{\nabla f(x_{k}^{md})}{x_{k+1}^{ag} - x_{k}^{md}} + \frac{L}{2} \norms{x_{k+1}^{ag} - x_{k}^{md}}^2 \right]
        \nonumber \\
        & = \beta_{k} \gamma_{k} \left[ f(x_k^{md}) + \dotprod{\nabla f(x_{k}^{md})}{x_{k+1}^{ag} - x_{k}^{md}}\right] + \frac{L \gamma_k}{2 \beta_k} \norms{x_{k+1} - x_{k}}^2 
        \nonumber \\
        & = \beta_{k} \gamma_{k} \left[ f(x_k^{md}) + \dotprod{\nabla f(x_{k}^{md})}{\beta^{-1}_k x_{k+1} + (1-\beta^{-1})x^{ag}_k - x_{k}^{md}}\right] 
        \nonumber \\
        & \quad \quad + \frac{L \gamma_k}{2 \beta_k} \norms{x_{k+1} - x_{k}}^2 
        \nonumber \\
        & = (\beta_{k} - 1) \gamma_{k} \left[ f(x_k^{md}) + \dotprod{\nabla f(x_{k}^{md})}{x_k^{ag} - x_{k}^{md}}\right] 
        \nonumber \\
        & \quad \quad + \gamma_{k} \left[ f(x_k^{md}) + \dotprod{\nabla f(x_{k}^{md})}{x_{k+1} - x_{k}^{md}}\right]  + \frac{L \gamma_k}{2 \beta_k} \norms{x_{k+1} - x_{k}}^2 
        \nonumber \\
        & \leq (\beta_k - 1) \gamma_k f(x_k^{ag}) + \gamma_k \left[ f(x_k^{md}) + \dotprod{\gg^B(x_k^{md})}{x_{k+1} - x_k^{md}} \right] 
        \nonumber \\
        & \quad \quad - \gamma_k \dotprod{\gg^B(x_k^{md}) - \nabla f(x_k^{md})}{x_{k+1} - x_{k}^{md}} + \frac{L \gamma_k}{2 \beta_k} \norms{x_{k+1} - x_{k}}^2
        \nonumber
    \end{align}
    Using Lemma \ref{lemma:lem1} with $x = x^* \in \argmin_{x: \norms{x} \leq R} f(x)$ for second term we obtain:
    \begin{align}
        \gamma_k  f(x_k^{md}) &+ \gamma_k  \dotprod{\gg^B(x_k^{md})}{x_{k+1} - x_k^{md}}
        \nonumber \\
        & = \gamma_k  f(x_k^{md}) + \gamma_k  \dotprod{\gg^B(x_k^{md})}{x^* - x_k^{md}}
        \nonumber \\
        & \quad \quad + \frac{1}{2} \norms{x^*-x_k}^2 - \frac{1}{2} \norms{x^*-x_{k+1}}^2 - \frac{1}{2} \norms{x_{k+1}-x_k}^2
        \nonumber \\
        & = \gamma_k  f(x_k^{md}) + \gamma_k  \dotprod{\nabla f(x_k^{md})}{x^* - x_k^{md}} + \gamma_k  \dotprod{\gg^B(x_k^{md}) - \nabla f(x_k^{md})}{x^* - x_k^{md}}
         \nonumber \\
        & \quad \quad + \frac{1}{2} \norms{x^*-x_k}^2 - \frac{1}{2} \norms{x^*-x_{k+1}}^2 - \frac{1}{2} \norms{x_{k+1}-x_k}^2
        \nonumber \\
        & \leq \gamma_k f^* + \gamma_k  \dotprod{\gg^B(x_k^{md}) - \nabla f(x_k^{md})}{x^* - x_k^{md}}
        \nonumber \\
        & \quad \quad + \frac{1}{2} \norms{x^*-x_k}^2 - \frac{1}{2} \norms{x^*-x_{k+1}}^2 - \frac{1}{2} \norms{x_{k+1}-x_k}^2.
        \nonumber
    \end{align}
    Substituting the obtained upper bound we obtain:
\allowdisplaybreaks
    \begin{align}
        \beta_k \gamma_k f(x_{k+1}^{ag}) &\leq (\beta_k - 1) \gamma_k f(x_k^{ag}) + \gamma_k f^* + \frac{L \gamma_k}{2 \beta_k} \norms{x_{k+1} - x_{k}}^2
        \nonumber \\
        & \quad \quad + \gamma_k \dotprod{\gg^B(x_k^{md}) - \nabla f(x_k^{md})}{x^* - x_{k+1}}
        \nonumber \\
        & \quad \quad + \frac{1}{2} \left( \norms{x^*-x_k}^2 - \norms{x^*-x_{k+1}}^2 - \norms{x_{k+1}-x_k}^2 \right) .
        \nonumber
    \end{align}
    Adding $\beta_k \gamma_k f^*$ to both sides we can obtain:
    \begin{align}
        \beta_k \gamma_k \left[ f(x_{k+1}^{ag}) - f^* \right] &\leq (\beta_k - 1) \gamma_k \left[ f(x_k^{ag}) - f^* \right] +\frac{1}{2} \norms{x_k - x^*}^2 - \frac{1}{2} \norms{x_{k+1} - x^*}^2
        \nonumber \\
        & \quad \quad + \gamma_k \dotprod{\gg^B(x_k^{md}) - \nabla f(x_k^{md})}{x^* - x_{k+1}}
        \nonumber \\
        & \quad \quad + \frac{L \gamma_k - \beta_k}{2 \beta_k} \norms{x_{k} - x_{k+1}}^2
        \nonumber \\
        & = (\beta_k - 1) \gamma_k \left[ f(x_k^{ag}) - f^* \right] +\frac{1}{2} \norms{x_k - x^*}^2 - \frac{1}{2} \norms{x_{k+1} - x^*}^2
        \nonumber \\
        & \quad \quad + \gamma_k \dotprod{\gg^B(x_k^{md}) - \nabla f(x_k^{md})}{x^* - x_{k}}
        \nonumber \\
        & \quad \quad + \gamma_k \dotprod{\gg^B(x_k^{md}) - \nabla f(x_k^{md})}{x_{k} - x_{k+1}}
        \nonumber \\
        & \quad \quad + \frac{L \gamma_k - \beta_k}{2 \beta_k} \norms{x_{k} - x_{k+1}}^2 
        \nonumber \\
        & \leq (\beta_k - 1) \gamma_k \left[ f(x_k^{ag}) - f^* \right] +\frac{1}{2} \norms{x_k - x^*}^2 - \frac{1}{2} \norms{x_{k+1} - x^*}^2
        \nonumber \\
        & \quad \quad + \gamma_k \dotprod{\gg^B(x_k^{md}) - \nabla f(x_k^{md})}{x^* - x_{k}}
        \nonumber \\
        & \quad \quad + \gamma_k \norms{\gg^B(x_k^{md}) - \nabla f(x_k^{md})} \norms{x_{k} - x_{k+1}}
        \nonumber \\
        & \quad \quad + \frac{L \gamma_k - \beta_k}{2 \beta_k} \norms{x_{k} - x_{k+1}}^2. 
        \nonumber
    \end{align}
    Since $\beta_k = 1 + \frac{k}{6} > \frac{1+k}{6} \geq 2L\gamma_k$, then 
    \begin{equation*}
        \gamma_k \norms{\gg^B(x_k^{md}) - \nabla f(x_k^{md})} \norms{x_{k} - x_{k+1}} + \frac{L \gamma_k - \beta_k}{2 \beta_k} \norms{x_{k} - x_{k+1}}^2
    \end{equation*} 
    is a quadratic polynomial of the form: $-\frac{a}{2}y^2 + by$ (where $y = \norms{x_k - x_{k+1}}$), which can be upper bounded by $-\frac{a}{2}y^2 + b y\leq \max_{y} \left\{ -\frac{a}{2}y^2 + b y \right\} = \frac{b^2}{2a}$. We get
    \begin{align}
        \beta_k \gamma_k \left[ f(x_{k+1}^{ag}) - f^* \right] &\leq (\beta_k - 1) \gamma_k \left[ f(x_k^{ag}) - f^* \right] +\frac{1}{2} \norms{x_k - x^*}^2 - \frac{1}{2} \norms{x_{k+1} - x^*}^2
        \nonumber \\
        & \quad \quad + \gamma_k \dotprod{\gg^B(x_k^{md}) - \nabla f(x_k^{md})}{x^* - x_{k}}
        \nonumber \\
        & \quad \quad + \frac{\beta_k \gamma_k^2}{2 (\beta_k - L \gamma_k)} \norms{\gg^B(x_k^{md}) - \nabla f(x_k^{md})}^2
        \nonumber \\ 
        & \leq (\beta_k - 1) \gamma_k \left[ f(x_k^{ag}) - f^* \right] +\frac{1}{2} \norms{x_k - x^*}^2 - \frac{1}{2} \norms{x_{k+1} - x^*}^2
        \nonumber \\
        & \quad \quad + \gamma_k \dotprod{\gg^B(x_k^{md}) - \nabla f(x_k^{md})}{x^* - x_{k}}
        \nonumber \\
        & \quad \quad + \gamma_k^2 \norms{\gg^B(x_k^{md}) - \nabla f(x_k^{md})}^2
        \nonumber \\
         & \leq (\beta_k - 1) \gamma_k \left[ f(x_k^{ag}) - f^* \right] +\frac{1}{2} \norms{x_k - x^*}^2 - \frac{1}{2} \norms{x_{k+1} - x^*}^2
        \nonumber \\
        & \quad \quad + \gamma_k \dotprod{\frac{1}{B}\sum_{i=1}^{B} \nabla f(x_{k}^{md}, \xi_i) - \nabla f(x_k^{md})}{x^* - x_{k}}
        \nonumber \\
        & \quad \quad + \gamma_k \dotprod{\gg^B(x_k^{md}) - \frac{1}{B}\sum_{i=1}^{B} \nabla f(x_{k}^{md}, \xi_i)}{x^* - x_{k}}
        \nonumber \\
        & \quad \quad + \gamma_k^2 \norms{\gg^B(x_k^{md}) - \nabla f(x_k^{md})}^2
        \nonumber \\
        & \leq (\beta_k - 1) \gamma_k \left[ f(x_k^{ag}) - f^* \right] +\frac{1}{2} \norms{x_k - x^*}^2 - \frac{1}{2} \norms{x_{k+1} - x^*}^2
        \nonumber \\
        & \quad \quad + \gamma_k \dotprod{\frac{1}{B}\sum_{i=1}^{B} \nabla f(x_{k}^{md}, \xi_i) - \nabla f(x_k^{md})}{x^* - x_{k}}
        \nonumber \\
        & \quad \quad + \gamma_k^2 \norms{\gg^B(x_k^{md}) - \nabla f(x_k^{md})}^2 + \gamma_k \dotprod{\bb(x_{k}^{md})}{x^* - x_{k}}.
        \nonumber
    \end{align}
    Taking the expectation of both sides we have:
    \begin{align}
        \beta_k \gamma_k  \expect{f(x_{k+1}^{ag}) - f^*} & \leq (\beta_k - 1) \gamma_k \expect{ f(x_k^{ag}) - f^*} +\frac{1}{2} \expect{\norms{x_k - x^*}^2} 
        \nonumber \\
        & \quad \quad - \frac{1}{2} \expect{\norms{x_{k+1} - x^*}^2} + \gamma_k^2 \expect{\norms{\gg^B(x_k^{md}) - \nabla f(x_k^{md})}^2} 
        \nonumber \\
        & \quad \quad + \gamma_k \dotprod{\bb(x_{k}^{md})}{x^* - x_{k}}.
        \nonumber
    \end{align}
    Using the Lemma \ref{lemma:lem2} we obtain:
    \begin{align}
        \beta_k \gamma_k  \expect{f(x_{k+1}^{ag}) - f^*} & \leq (\beta_k - 1) \gamma_k \expect{ f(x_k^{ag}) - f^*} +\frac{1}{2} \expect{\norms{x_k - x^*}^2} 
        \nonumber \\
        & \quad \quad - \frac{1}{2} \expect{\norms{x_{k+1} - x^*}^2} +  \frac{8 L^2 R^2 \gamma_k^2}{B \beta_k^2}
        \nonumber \\
        & \quad \quad  + \frac{8 L \gamma_k^2}{B} \expect{f(x_k^{ag}) - f^*} + \frac{4 \sigma^2_{*} \gamma_k^2}{B} 
        \nonumber \\
        & \quad \quad + \gamma_k \dotprod{\bb(x_{k}^{md})}{x^* - x_{k}} + \gamma_k^2 \norms{\bb(x_k^{md})}^2
        \nonumber \\
        & \leq \left(\beta_k - 1 + \frac{8 L \gamma_k}{B} \right) \gamma_k \expect{ f(x_k^{ag}) - f^*} 
        \nonumber \\
        & \quad \quad +\frac{1}{2} \expect{\norms{x_k - x^*}^2}  - \frac{1}{2} \expect{\norms{x_{k+1} - x^*}^2} 
        \nonumber \\
        & \quad \quad + \frac{8 L^2 R^2 \gamma_k^2}{B \beta_k^2} + \frac{4 \sigma^2_{*} \gamma_k^2}{B} 
        \nonumber \\
        & \quad \quad + \gamma_k \dotprod{\bb(x_{k}^{md})}{x^* - x_{k}} + \gamma_k^2 \norms{\bb(x_k^{md})}^2.
        \nonumber
    \end{align}
    We now remind that
    \begin{align}
        \beta_k &= 1 + \frac{k}{6};
        \nonumber \\
        \gamma_k &= \gamma(k+1);
        \nonumber \\
        \gamma &\leq \min\left\{ \frac{1}{12 L}, \frac{B}{24 L (N+1)} \right\} .
        \nonumber
    \end{align}
    This ensure that $\forall k:$ $\beta_k \geq 1$ and $2L \gamma_k \leq \beta_k$. Moreover, for $k \in [0; N-1]$:
    \begin{align}
        \left(\beta_{k+1} - 1 + \frac{8 L \gamma_{k+1}}{B} \right) &\gamma_{k+1} - \beta_k \gamma_k
        \nonumber \\
        & = \left( \beta_k - \frac{5}{6} + \frac{8 L \gamma_{k+1}}{B} \right) \gamma(k+2) - \beta_k \gamma(k+1)
        \nonumber \\
        & = \gamma \left( 1 + \frac{k}{6} - \frac{5 (k+2)}{6} + \frac{8 L \gamma (k+2)^2}{B} \right)
        \nonumber \\
        & = \gamma \left( - \frac{2}{3} - \frac{2k}{3} + \frac{(k+2)}{3} \cdot \frac{24 L (k+2) \gamma}{B} \right)
        \nonumber \\
        & \leq \gamma \left( -\frac{k}{3} \right) \leq 0.
        \nonumber
    \end{align}
    Thus we have shown that for $k \in [0; N-1]$: $\left(\beta_{k+1} - 1 + \frac{8 L \gamma_{k+1}}{B} \right) \gamma_{k+1} \leq \beta_k \gamma_k$. Therefore, we can conclude the following:
    \begin{align}
        \beta_k \gamma_k  \expect{f(x_{k+1}^{ag}) - f^*} & \leq \left(\beta_k - 1 + \frac{8 L \gamma_k}{B} \right) \gamma_k \expect{ f(x_k^{ag}) - f^*} 
        \nonumber \\
        & \quad \quad +\frac{1}{2} \expect{\norms{x_k - x^*}^2}  - \frac{1}{2} \expect{\norms{x_{k+1} - x^*}^2} 
        \nonumber \\
        & \quad \quad + \frac{8 L^2 R^2 \gamma_k^2}{B \beta_k^2} + \frac{4 \sigma^2_{*} \gamma_k^2}{B} 
        \nonumber \\
        & \quad \quad + \gamma_k \dotprod{\bb(x_{k}^{md})}{x^* - x_{k}} + \gamma_k^2 \norms{\bb(x_k^{md})}^2
        \nonumber \\
        & \leq \beta_{k-1} \gamma_{k-1} \expect{ f(x_k^{ag}) - f^*} 
        \nonumber \\
        & \quad \quad +\frac{1}{2} \expect{\norms{x_k - x^*}^2}  - \frac{1}{2} \expect{\norms{x_{k+1} - x^*}^2} 
        \nonumber \\
        & \quad \quad + \frac{8 L^2 R^2 \gamma_k^2}{B \beta_k^2} + \frac{4 \sigma^2_{*} \gamma_k^2}{B} 
        \nonumber \\
        & \quad \quad + \gamma_k \dotprod{\bb(x_{k}^{md})}{x^* - x_{k}} + \gamma_k^2 \norms{\bb(x_k^{md})}^2.
        \nonumber
    \end{align}
    Summing the both sides over $k$ we obtain:
    \begin{align}
        \sum_{k = 0}^{N-1} \beta_k \gamma_k  \expect{f(x_{k+1}^{ag}) - f^*} 
        & \leq \sum_{k = 0}^{N-1} \beta_{k-1} \gamma_{k-1} \expect{ f(x_k^{ag}) - f^*} 
        \nonumber \\
        & \quad \quad + \sum_{k = 0}^{N-1} \left(\frac{1}{2} \expect{\norms{x_k - x^*}^2}  - \frac{1}{2} \expect{\norms{x_{k+1} - x^*}^2} \right)
        \nonumber \\
        & \quad \quad + \sum_{k = 0}^{N-1} \frac{8 L^2 R^2 \gamma_k^2}{B \beta_k^2} + \sum_{k = 0}^{N-1} \frac{4 \sigma^2_{*} \gamma_k^2}{B} 
        \nonumber \\
        & \quad \quad + \sum_{k = 0}^{N-1} \gamma_k \dotprod{\bb(x_{k}^{md})}{x^* - x_{k}} + \sum_{k = 0}^{N-1} \gamma_k^2 \norms{\bb(x_k^{md})}^2.
        \nonumber
    \end{align}
    Simplifying the expression we have
    \begin{align}
        \beta_{N-1} \gamma_{N-1}  \expect{f(x_{N}^{ag}) - f^*} 
        & \leq \frac{1}{2} \expect{\norms{x_0 - x^*}^2}  + \sum_{k = 0}^{N-1} \frac{288 L^2 R^2 \gamma^2(k+1)^2}{B (k+6)^2} + \sum_{k = 0}^{N-1} \frac{4 \sigma^2_{*} \gamma^2(k+1)^2}{B} 
        \nonumber \\
        & \quad \quad + \sum_{k = 0}^{N-1} \gamma (k+1) \dotprod{\bb(x_{k}^{md})}{x^* - x_{k}} + \sum_{k = 0}^{N-1} \gamma^2(k+1)^2 \norms{\bb(x_k^{md})}^2
        \nonumber \\
        & \leq \frac{R^2}{2}  + \frac{288 L^2 R^2 \gamma^2 N}{B} + \frac{12 \sigma^2_{*} \gamma^2 N^3}{B} + 2 \gamma N^2 R \zeta + \gamma^2 N^3 \zeta^2,
        \nonumber 
    \end{align}
    where $\norms{\bb(x_k^{md})}^2 \leq \zeta^2$. Divide the left and right side by $\beta_{N-1} \gamma_{N-1} \simeq \gamma N^2$:
    \begin{align}
        \expect{f(x_{N}^{ag}) - f^*} 
        & \leq \frac{R^2}{2 \gamma N^2}  + \frac{288 L^2 R^2 \gamma}{B N} + \frac{12 \sigma^2_{*} \gamma N}{B} + 2 R \zeta + \gamma N \zeta^2.
        \nonumber 
    \end{align}
    With our choice of $\gamma = \min \left\{ \frac{1}{12 L}, \frac{B}{24 L (N+1)}, \sqrt{\frac{BR^2}{\sigma_*^2 N^3}} \right\}$ we obtain:
    \begin{align}
        \expect{f(x_{N}^{ag}) - f^*} 
        & \lesssim \frac{L R^2}{ N^2}  + \frac{L R^2 }{BN} + \frac{\sigma_{*}R}{\sqrt{BN}} +  \zeta R + \frac{\zeta^2 N}{2L}.
        \nonumber 
    \end{align}
    \qed

    \section{Proof Theorem on the convergence of AZO-SGD} \label{Appendix:proof_th2}
    In this section, we present a detailed proof of the results of Theorem \ref{th:AZO_SGD}. First, we find the bias and the second moment of the gradient approximation \eqref{eq:gradient_approximation} based on the improved analysis of the paper \cite{Akhavan_2023}.
    \paragraph{Bias of gradient approximation}
    Using the variational representation of the Euclidean norm, and definition of gradient approximation \eqref{eq:gradient_approximation} we can write:
    \begin{align}
        \norms{\expect{\gg(x_k,\xi,e)} - \nabla f(x_k)} &= \norms{\expect{\frac{d}{2 \tau}\left( f_\delta(x_k + \tau e, \xi) - f_\delta(x_k - \tau e, \xi) \right) e} - \nabla f(x_k)}
        \nonumber \\
        & \overset{\circledOne}{=} \norms{\expect{\frac{d}{\tau}\left( f(x_k + \tau e, \xi) + \delta(x_k + \tau e) \right) e} - \nabla f(x_k)}
        \nonumber \\
        & \overset{\circledTwo}{\leq}  \norms{\expect{\frac{d}{\tau} f(x_k + \tau e, \xi) e} - \nabla f(x_k)} + \frac{d \Delta}{\tau}
        \nonumber \\
        & \overset{\circledThree}{=}   \norms{\expect{\nabla f(x_k + \tau u, \xi)} - \nabla f(x_k)} + \frac{d \Delta}{\tau}
        \nonumber \\
        & = \sup_{z \in S_2^d(1)} \expect{| \nabla_z f(x_k + \tau u, \xi) - \nabla_z f(x_k)|} + \frac{d \Delta}{\tau}
        \nonumber \\
        & \overset{\eqref{eq:L_smoothness}}{\leq} L \tau \expect{\norms{u}} + \frac{d \Delta}{\tau} 
        \nonumber \\
        & \leq L \tau + \frac{d \Delta}{\tau}, \label{eq:proof_bias}
    \end{align}
    where $u \in B_2^d(1)$, $\circledOne =$ the equality is obtained from the fact, namely, distribution of $e$ is symmetric, $\circledTwo =$ the inequality is obtain from bounded noise $|\delta(x)| \leq \Delta$, $\circledThree =$ the equality is obtained from a version of Stokes’ theorem \cite{Zorich_2016}.

    \paragraph{Bounding second moment of gradient approximation} By definition gradient approximation \eqref{eq:gradient_approximation} and Wirtinger-Poincare inequality \eqref{eq:Wirtinger_Poincare} we have
    \begin{align}
        \expect{\norms{\gg(x^*,\xi,e)}^2} & = \frac{d^2}{4 \tau^2} \expect{\norms{\left(f_\delta(x^* + \tau e, \xi) - f_\delta(x^* - \tau e, \xi)\right) e}^2}
        \nonumber \\
        & = \frac{d^2}{4 \tau^2} \expect{\left(f(x^* + \tau e, \xi) - f(x^* - \tau e, \xi) + \delta (x^* + \tau e) - \delta (x^* -\tau e)\right)^2}
        \nonumber \\
        & \overset{\eqref{eq:squared_norm_sum}}{\leq} \frac{d^2}{2 \tau^2} \left( \expect{\left(f(x^* + \tau e, \xi) - f(x^* - \tau e, \xi)\right)^2} + 2 \Delta^2 \right)
        \nonumber \\
        & \overset{\eqref{eq:Wirtinger_Poincare}}{\leq} \frac{d^2}{2 \tau^2} \left( \frac{\tau^2}{d} \expect{\norms{ \nabla f(x^* + \tau e, \xi) + \nabla f(x^* - \tau e, \xi)}^2} + 2 \Delta^2 \right) 
        \nonumber \\
        & = \frac{d^2}{2 \tau^2} \left( \frac{\tau^2}{d} \expect{\norms{ \nabla f(x^* + \tau e, \xi) + \nabla f(x^* - \tau e, \xi) \pm 2 \nabla f(x^*, \xi)}^2} + 2 \Delta^2 \right)
        \nonumber \\
        & \overset{\eqref{eq:L_smoothness}}{\leq} 4d \norms{\nabla f(x^*, \xi)}^2 + 4 d  L^2 \tau^2 \expect{\norms{e}^2}  + \frac{d^2 \Delta^2}{\tau^2}  
        \nonumber \\
        & \overset{\circledOne}{\leq} 4d \sigma^2_* + 4 d  L^2 \tau^2 \expect{\norms{e}^2}  + \frac{d^2 \Delta^2}{\tau^2}, \label{eq:proof_variance}
    \end{align}
    where $\circledOne =$ the inequality is obtain from Assumption \ref{ass:stoch_noise}.

   We can now explicitly write down the convergence of the gradient-free AZO-SGD method (see Section \ref{sec:Main_Result}, Algorithm \ref{alg:AZO_SGD}) by substituting upper bounds on the bias \eqref{eq:proof_bias} and second moment \eqref{eq:proof_variance} for the gradient approximation \eqref{eq:gradient_approximation} in the convergence of the first-order method: Biased AC-SA (see Theorem \ref{th:biased_AC_SA}):
   \begin{align}
       \expect{f(x_{N}^{ag}) - f^*} 
        & \lesssim \underbrace{\frac{L R^2}{ N^2}}_{\circledOne}  + \underbrace{\frac{L R^2 }{BN}}_{\circledTwo} + \underbrace{\frac{\sqrt{d} \sigma_{*}R}{\sqrt{BN}}}_{\circledThree} + \underbrace{\frac{\sqrt{d} L \tau R}{\sqrt{BN}}}_{\circledFour} + \underbrace{\frac{d \Delta R}{\tau \sqrt{BN}}}_{\circledFive}  
        \nonumber \\
        & \quad \quad +  \underbrace{L \tau R}_{\circledSix} + \underbrace{\frac{d \Delta R}{\tau}}_{\circledSeven} + \underbrace{L \tau^2 N}_{\circledEight} + \underbrace{\frac{d^2 \Delta^2 N}{\tau^2 L}}_{\circledNine}.
        \nonumber 
   \end{align}
    \begin{flushleft}
        \textit{ Proof of the Theorem \ref{th:AZO_SGD}}. 
    \end{flushleft}
    \textbf{From term $\circledOne$}, we find the number of iterations $N$ required for Algorithm \ref{alg:AZO_SGD} to achieve $\varepsilon$-accuracy:
    \begin{align}
        \circledOne: \quad \frac{L R^2}{ N^2} \leq \varepsilon \quad & \Rightarrow \quad N \geq \sqrt{\frac{L R^2}{\varepsilon}};
        \nonumber \\
         N &= \mathcal{O}\left( \sqrt{\frac{L R^2}{\varepsilon}} \right). \label{eq:proof_iterations}
    \end{align}
    \textbf{From terms $\circledTwo$ and $\circledThree$}, we find the batch size $B$ required to achieve optimality in iteration complexity $N$: 
    \begin{align}
        &\circledTwo: \quad \frac{L R^2 }{BN} \leq \varepsilon \quad \Rightarrow \quad B \geq \frac{L R^2}{\varepsilon N} 
        \overset{\eqref{eq:proof_iterations}}{=} \mathcal{O}\left( \sqrt{\frac{L R^2}{\varepsilon}}\right);
        \nonumber \\
        &\circledThree: \quad \frac{\sqrt{d} \sigma_{*}R}{\sqrt{BN}} \quad  \Rightarrow \quad B \geq \frac{d \sigma^2_* R^2}{\varepsilon^2 N} 
        \overset{\eqref{eq:proof_iterations}}{=}  \mathcal{O}\left( \frac{d \sigma^2_* R}{\varepsilon^{3/2} L^{1/2}} \right);
        \nonumber \\
        & \quad \quad \quad B = \max \left\{ \mathcal{O}\left( \sqrt{\frac{L R^2}{\varepsilon}}\right),  \mathcal{O}\left( \frac{d \sigma^2_* R}{\varepsilon^{3/2} L^{1/2}} \right)\right\}. \label{eq:proof_batch_size}
    \end{align}
    \textbf{From terms $\circledFour$, $\circledSix$ and $\circledEight$} we find the smoothing parameter $\tau$:
    \begin{align}
        &\circledFour: \quad \frac{\sqrt{d} L \tau R}{\sqrt{BN}} \leq 
        \varepsilon \quad \Rightarrow \quad \tau \leq \frac{\varepsilon \sqrt{BN}}{\sqrt{d} LR} \overset{\eqref{eq:proof_iterations}, \eqref{eq:proof_batch_size}}{=} \max \left\{ \sqrt{\frac{\varepsilon}{d L}}, \frac{\sigma_*}{L} \right\};
        \nonumber \\
        & \circledSix: \quad L \tau R \leq 
        \varepsilon \quad \Rightarrow \quad \tau \leq \frac{\varepsilon}{L R};
        \nonumber \\
        & \circledEight: \quad L \tau^2 N \leq \varepsilon \quad \Rightarrow \quad \tau \leq \sqrt{\frac{\varepsilon}{L N}} \overset{\eqref{eq:proof_iterations}}{=} \frac{\varepsilon^{3/4}}{L^{3/4} R^{1/2}};
        \nonumber \\
        & \quad \quad \quad \tau \leq \min \left\{ \max \left\{ \sqrt{\frac{\varepsilon}{d L}}, \frac{\sigma_*}{L} \right\}, \frac{\varepsilon}{L R}, \frac{\varepsilon^{3/4}}{L^{3/4} R^{1/2}} \right\} = \frac{\varepsilon}{L R}. \label{eq:proof_smoothing_parameter}
    \end{align}
    \textbf{From the remaining terms $\circledFive$, $\circledSeven$, and $\circledNine$}, we find the maximum allowable level of adversarial noise $\Delta$ that still guarantees the convergence of the Accelareted Zero-Order Stochastic Gradient Descent Method to desired accuracy~$\varepsilon$:
    \begin{align}
        &\circledFive: \quad \frac{d \Delta R}{\tau \sqrt{BN}} \leq \varepsilon \quad \Rightarrow \quad \Delta \leq \frac{\varepsilon \tau \sqrt{B N}}{d R} \overset{\eqref{eq:proof_iterations}, \eqref{eq:proof_batch_size}, \eqref{eq:proof_smoothing_parameter}}{=} \max \left\{ \frac{\varepsilon^{3/2}}{d \sqrt{L} R}, \frac{\varepsilon \sigma_*}{d L R} \right\};
        \nonumber \\
        & \circledSeven: \quad \frac{d \Delta R}{\tau} \leq \varepsilon \quad \Rightarrow \quad \Delta \leq \frac{\varepsilon \tau}{d R} \overset{\eqref{eq:proof_smoothing_parameter}}{=} \frac{\varepsilon^2}{d L R};
        \nonumber \\
        &\circledNine: \quad \frac{d^2 \Delta^2 N}{\tau^2 L} \leq \varepsilon \quad \Rightarrow \quad \Delta \leq \sqrt{\frac{\varepsilon \tau^2 L}{d^2 N}} \overset{\eqref{eq:proof_iterations}, \eqref{eq:proof_smoothing_parameter}}{=} \frac{\varepsilon^{7/4}}{d L^{3/4} R^{3/2}};
        \nonumber \\
        & \quad \quad \quad \Delta \leq \min \left\{ \max \left\{ \frac{\varepsilon^{3/2}}{d \sqrt{L} R}, \frac{\varepsilon \sigma_*}{d L R} \right\}, \frac{\varepsilon^2}{d L R}, \frac{\varepsilon^{7/4}}{d L^{3/4} R^{3/2}} \right\} = \frac{\varepsilon^2}{d L R}. 
        \label{eq:proof_noise_level}
    \end{align}
    In this way, the Accelareted Zero-Order Stochastic Gradient Descent (AZO-SGD) Method (see Algorithm \ref{alg:AZO_SGD}) achieves $\varepsilon$-accuracy: $\expect{f(x_{N}^{ag}) - f^*} \leq \varepsilon$ after 
        \begin{equation*}
            N = \mathcal{O}\left( \sqrt{\frac{L R^2}{\varepsilon}} \right), \quad T = N \cdot B = \max \left\{ \mathcal{O}\left( \frac{LR^2}{\varepsilon} \right), \mathcal{O}\left( \frac{d \sigma_*^2 R^2}{\varepsilon^{2}} \right) \right\}
        \end{equation*}
        number of iterations \eqref{eq:proof_iterations}, total number of gradient-free oracle calls \eqref{eq:proof_batch_size} and at
        \begin{equation*}
            \Delta \leq \frac{\varepsilon^2}{d L R^2}
        \end{equation*}
        the maximum level of noise \eqref{eq:proof_noise_level} with smoothing parameter $\tau = \frac{\varepsilon}{L R}$ \eqref{eq:proof_smoothing_parameter}. \\
    \qed

\end{document}